\theoremstyle{comment}
\newtheorem{theorem}{Theorem}[section]
\newtheorem{problem}{Problem}[section]
\newtheorem{construction}{Construction}[section]
\newtheorem{lemma}{Lemma}[section]
\newtheorem{corollary}{Corollary}[section]
\newtheorem{proposition}{Proposition}[section]
\theoremstyle{definition}
\newtheorem{definition}{Definition}[section]
\newtheorem{conjecture}{Conjecture}[section]
\newtheorem{example}{Example}[section]
\newtheorem{theorem}{Theorem}[section]
\newtheorem{construction}[theorem]{Construction}
\newtheorem{lemma}[theorem]{Lemma}
\newtheorem{corollary}[theorem]{Corollary}
\begin{document}

\title{An infinite family of locally X graphs\\ based on incidence geometries}

\author{Natalia Garcia-Colin}
\address{Natalia Garcia-Colin, CONACYT-INFOTEC. Circuito Tecnopolo Sur No 112, Col. Fracc. Tecnopolo Pocitos C.P. 20313, Aguascalientes, Ags. México.}
\email{natalia.garcia@infotec.mx}

\author{Dimitri Leemans}
\address{Dimitri Leemans, Universit\'e Libre de Bruxelles, D\'epartement de Math\'ematique, C.P.216 - Alg\`ebre et Combinatoire, Boulevard du Triomphe, 1050 Brussels, Belgium}
\email{dleemans@ulb.ac.be}

\begin{abstract}
A graph ${\mathcal G}$ is {\em locally X} if the graphs induced on the neighbours of every vertex of ${\mathcal G}$ are isomorphic to the graph $X$. 
We prove that the infinite family of incidence graphs of the $r$--rank incidence geometries, $\Gamma(KG(n,k),r)$, constructed using the Kneser graphs $KG(n,k)$, are locally $X$ with $X$ being the incidence graphs of the rank $r-1$ residues of $\Gamma(KG(n,k),r)$.
\end{abstract}
\maketitle
\noindent \textbf{Keywords:} Kneser graph, locally X graph, incidence geometry.

\noindent \textbf{2010 Math Subj. Class:} 05C75, 51E24

\section{Introduction}

For a given graph $X$, a graph ${\mathcal G}$ is {\em locally X} if the graphs induced on the neighbours of every vertex of ${\mathcal G}$ are isomorphic to $X$. In the literature, ${\mathcal G}$ can also be referred to as an {\em extension of X} or a {\em locally homogeneous graph.}

A. Zykov \cite{Zykov,Zyk63} posed the problem of characterizing the graphs, $X,$ for which there are locally $X$ graphs. Finding any general solution for this problem is difficult, if at all possible. 

Apart from the inherent interest of this problem for graph theorists, another motivation for the study of locally homogeneous graphs is observed in \cite{Hal87}:

{\em ``The theorems may find application in the characterization of the Johnson scheme among the primitive association schemes and distance regular graphs. It can also be used to characterize alternating and symmetric groups (of sufficiently large degree) by centralizers of various of their elements (the initial motivation for the theorem)."}

The progress thus far has followed three general lines of enquiry; the undecidability of the problem; the construction of locally $X$ graphs for some selected graphs, $X$; and sufficient conditions for a graph $X$ to have an extension.

It has been shown that many extension problems are undecidable \cite{Bul72,Bul73}. Among them, it is algorithmically undecidable to determine whether a given graph $L$ has a finite or infinite extension, and to determine whether a given graph $L$ has an {infinite} extension. However, it is still unknown whether the problem of determining when a given graph $L$ has a finite extension is decidable or undecidable. A survey paper \cite{Hel78} dealing with the progress in solving these problems was published in 1976.

In \cite{Hal85}, a complete list of all graphs $X$ of order up to six having an extension is given; in some cases all such extensions are characterized. Constructions of locally $X$ graphs, for instance, cycles \cite{MR0357220, MR1376183}, unions of paths \cite{Parsons1989}, trees \cite{BHM80}, polyhedra \cite{BUSET1983221, MR826792, MR826793}, the Petersen graph \cite{Hall1980}, other Kneser graphs \cite{Hal87,Moon84}, dense graphs \cite{Bugata1989, Ned92} and others have been investigated in \cite{BC73,BC75,BKK03,MR1084898, BUEKENHOUT1977391, Doyen1976, Hall1985GD, Hell1977}. A rich compilation of locally $X$ graphs can also be found in \cite{MathWorld}.

Some structural characteristics for a graph $X$ to have an extension have been given in \cite{Al10,BC73,BC75,Hel78,Wee94}. For example, in \cite{Wee94} it is proved that if $X$ is a regular graph of degree greater than one and girth at least six then $X$ has an extension. In \cite{Vog84}, its proven that if there exists a finite locally $X$ graph ${\mathcal G}$, where $X$ is a disconnected finite graph, then for an arbitrary finite group $\Gamma$ there are infinitely many connected locally $X$ graphs such that the automorphism group $Aut(\mathcal G) \cong \Gamma$.

Finally, we may mention the problem of local homogeneity of graphs is related to algebraic topology and to group theory~\cite{Vin81, Ronan81, SUROWSKI1985371}.

The paper is organized as follows. In Section \ref{back} we present some structural properties of Kneser graphs which will be used in the construction of the incidence geometries we study and we present the necessary background on incidence geometries. In Section \ref{sec:Kneser} we compute the neighbourhood geometry of a Kneser graph, which is used in Section \ref{sec:locallyX} to construct a new infinite family of locally $X$ graphs.
 
\section{background}\label{back}

\subsection{Graph theory}

%For a given graph $X$, a graph ${\mathcal G}$ is {\em locally X} if the graphs induced on the neighbours of every vertex of ${\mathcal G}$ are isomorphic to $X$.

%Examples of locally X graphs include complete graphs $K_n$ whose vertices' neighbours are complete graphs $K_{n-1}$. Another example is obtained using Kneser graphs.

A {\em Kneser graph\footnote{Lov\'asz introduced the term Kneser graph in~\cite{Lovasz} after a problem posed by Kneser in~\cite{Kneser}.}} $KG(n,k)$ is a graph whose vertex set is the set of all $k$-subsets of $\{1, \ldots, n\}$ and any pair of disjoint subsets is joined by an edge.
Complete graphs are very familiar objects, for instance $K_n$ are Kneser graphs $KG(n,1)$ and the Petersen graph is a $KG(5,2)$. Its very simple to see that Kneser graphs are locally Kneser graphs, furthermore Jonathan Hall proved in~\cite{Hall1980} that there are exactly three pairwise non-isomorphic locally Petersen graphs, only one of them being a Kneser graph, namely $KG(7,2)$.

The following lemmas will cover some structural characteristics of Kneser graphs which, in turn, will be used in the later sections for determining structural characteristics of our constructions.

\begin{lemma}\label{lem:oddcycle}
The smallest odd cycle of a Kneser graph, $KG(n,k)$, with $n>2k+1$ has length $2\lceil\frac{k}{n-2k}\rceil +1.$
\end{lemma}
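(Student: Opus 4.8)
The quantity in question is the odd girth of $KG(n,k)$, so the plan is to prove matching lower and upper bounds, writing $m=\lceil \frac{k}{n-2k}\rceil$ and $d=n-2k$ (so $d\ge 2$ under the hypothesis $n>2k+1$).

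For the lower bound, suppose $A_0,A_1,\dots,A_{2l}$ is an odd cycle of length $2l+1$, with indices read cyclically modulo $2l+1$, so that $A_i\cap A_{i+1}=\emptyset$ for all $i$. The key observation is that for each ground element $x\in\{1,\dots,n\}$ the index set $S_x=\{\,i : x\in A_i\,\}$ can contain no two cyclically consecutive indices, since $A_i\cap A_{i+1}=\emptyset$; hence $S_x$ is an independent set in the cycle $C_{2l+1}$ and $|S_x|\le l$. Double counting incidences then gives $(2l+1)k=\sum_i|A_i|=\sum_x|S_x|\le nl$, whence $k\le l(n-2k)$ and therefore $l\ge \frac{k}{n-2k}$. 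As $l$ is an integer this forces $l\ge m$, so every odd cycle has length at least $2m+1$.

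For the upper bound I would exhibit a closed walk of odd length $2m+1$, which necessarily contains an odd cycle, so that the odd girth is at most $2m+1$. Identify the ground set with $\mathbb{Z}_n$ and let $A_0=\{0,1,\dots,k-1\}$ be a cyclic interval. Two cyclic intervals of length $k$ that differ by a shift $s$ are disjoint precisely when $k\le s\le n-k$, so I would build the walk by setting $A_{i+1}=A_i+s_i$ with each shift $s_i$ an integer in $[k,\,n-k]=[k,\,k+d]$; consecutive sets are then automatically disjoint. The walk closes up into a sequence of $\mathbb{Z}_n$-translates with $A_{2m+1}=A_0$ exactly when $\sum_{i=0}^{2m}s_i$ is a multiple of $n$. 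Since each $s_i$ ranges over $d+1$ consecutive integers, the attainable values of $\sum_i s_i$ are all integers in the interval $[(2m+1)k,\,(2m+1)(k+d)]$, whose length $(2m+1)d$ satisfies $(2m+1)d=2md+d\ge 2k+d=n$ because $md\ge k$ by the definition of $m$; an interval of this length contains a multiple of $n$, so suitable shifts exist.

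The two bounds together give the claim. The main obstacle is the upper bound, and specifically the arithmetic step of realising a prescribed total shift that is divisible by $n$ while keeping every individual shift in the disjointness window $[k,\,k+d]$: this is exactly where the value $m=\lceil \frac{k}{n-2k}\rceil$ enters, through the inequality $md\ge k$. A secondary point to handle cleanly is that the construction a priori yields a closed odd walk rather than a simple cycle; invoking the standard fact that every closed walk of odd length contains an odd cycle lets me avoid checking simplicity directly.
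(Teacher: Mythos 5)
Your proof is correct, and it takes a genuinely different route from the paper's on both halves of the argument. For the lower bound, the paper first reduces to the case $n<3k$, writes $n=2k+r$, and iterates a local estimate along the cycle: from $A_1\cup A_3\subseteq A_2^c$ and $|A_2^c|=k+r$ it gets $|A_1\cap A_3|\ge k-r$, then inductively $|A_1\cap A_{2i+1}|\ge k-ir$, so closing the cycle forces $k-lr\le 0$. Your double count --- each ground element occupies an independent set of positions in $C_{2l+1}$, hence at most $l$ of them, giving $(2l+1)k\le nl$ --- reaches the same inequality in one global step and needs no preliminary case split for $n\ge 3k$. For the upper bound, the paper is terse: its ``if and only if'' is argued in only one direction inside that proof, and the construction realizing a shortest odd cycle is effectively deferred to the explicit partition construction in the proof of Lemma~\ref{lem:paths} (an odd path of length $2\lceil \frac{c}{n-2k}\rceil+1$ between sets meeting in $c$ points, applied with the two endpoints equal). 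You instead translate a cyclic interval around $\mathbb{Z}_n$ by shifts confined to the disjointness window $[k,n-k]$, close the walk by an arithmetic argument (the attainable total shifts fill an integer interval of length $(2m+1)d\ge n$, hence contain a multiple of $n$), and pass from a closed odd walk to an odd cycle by the standard parity fact. What each buys: your version is self-contained and explicitly settles the tightness question that the paper's wording glosses over, while the paper's route, by factoring the construction through Lemma~\ref{lem:paths}, obtains the stronger path statement it needs later (for instance in the diameter computation of Lemma~\ref{lem:diameter}).
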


\begin{proof}

We may assume that $n<3k$, as $KG(n,k)$ with $n\geq 3k$ has triangles, and the statement holds. Let $n=2k+r$ for some $r<k$ and $A_1, \ldots, A_{2l+1}$ be the vertices of the smallest odd cycle, as subsets of $[n]$.

By construction, we have $A_1 \cap A_2= \emptyset$ and $A_2 \cap A_3= \emptyset$ thus $A_1 \cup A_3 \subset A_2 ^c$ and they have non empty intersection as,  $|A_2 ^c|=n-k=k+r<2k$, thus $|A_1 \cap A_3|\geq k-r$ and $A_1$ and $A_3$ cannot be adjacent. Similarly, we can argue that $|A_3 \cap A_5|\geq k-r$ thus $|A_1 \cap A_5|\geq k-2r$. We may continue this process to conclude that $|A_1 \cap A_{2i+1}|\geq k-ir$. 

Hence $A_1 \cap A_{2l+1} = \emptyset$ if and only if $k-lr\leq 0$. This happens precisely when $\lceil\frac{k}{n-2k}\rceil\leq l$ and the result follows.
\end{proof}

\begin{lemma}\label{lem:paths}
Between any two vertices of a Kneser graph, $A,B$, such that $|A \cap B|=c$ there is an even path of length  $2\lceil\frac{k-c}{n-2k}\rceil$ and an odd path of length $2\lceil\frac{c}{n-2k}\rceil+1.$
\end{lemma}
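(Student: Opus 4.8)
The plan is to set $r=n-2k$ (assuming $n\geq 2k+1$, so that $r\geq 1$ and $KG(n,k)$ actually has edges) and to build both paths by controlling, step by step, the size of the intersection with the target vertex $B$. The workhorse is a \emph{two-step estimate}: from any vertex $C$ with $|C\cap B|=j<k$ one can reach, in exactly two edges $C\to D\to E$, a vertex $E$ with $C\cap B\subseteq E\cap B$ and $|E\cap B|=\min(k,j+r)$. To prove this I would fix a set $B_2$ with $C\cap B\subseteq B_2\subseteq B$ and $|B_2|=\min(k,j+r)=:m$, and look for an intermediate vertex $D$ disjoint from both $C$ and $B_2$, i.e. $D\subseteq C^c\cap B_2^c$. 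Since $C\cap B_2=C\cap B$ has size $j$, we get $|C^c\cap B_2^c|=n-|C\cup B_2|=k+r-m+j\geq k$ (because $m\leq j+r$), so such a $D$ exists; then any $E$ with $B_2\subseteq E\subseteq D^c$ and $|E|=k$ works, because the $k-m$ remaining elements of $E$ must be drawn from $(B_2\cup D)^c$, which has size $k+r-m\geq k-m$. Hence each pair of steps raises the overlap with $B$ by $r$ (until it saturates at $k$) while never discarding an already-matched element of $B$.

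For the \emph{even path} I would start at $A$, where the overlap is $c$, and iterate the two-step estimate. After $t$ double-steps the overlap is $\min(k,c+tr)$, which first reaches $k$ — i.e. the current vertex becomes $B$ — exactly when $t=\lceil(k-c)/r\rceil$. This produces a path $A=C_0,\ldots,C_{2t}=B$ of length $2\lceil\frac{k-c}{n-2k}\rceil$, as required.

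For the \emph{odd path} I would first step ``to the far side'' of $B$ and then reduce to the even case already handled. Choose a vertex $C$ disjoint from $B$ that retains all of $A\setminus B$, namely $C=(A\setminus B)\cup S$ with $S\subseteq (A\cup B)^c$ and $|S|=c$; this is possible because $|A\setminus B|=k-c$ and $|(A\cup B)^c|=r+c\geq c$. Then $C\cap B=\emptyset$ and $|A\cap C|=k-c$, so applying the even-path construction to the pair $A,C$ gives a path from $A$ to $C$ of length $2\lceil\frac{k-(k-c)}{n-2k}\rceil=2\lceil\frac{c}{n-2k}\rceil$; appending the edge $C\to B$ yields an odd path of length $2\lceil\frac{c}{n-2k}\rceil+1$. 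When $A=B$, i.e. $c=k$, this construction returns the shortest odd closed walk and thereby recovers Lemma~\ref{lem:oddcycle}.

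The main obstacle — really the only delicate point — is verifying the counting inequalities that guarantee each intermediate vertex exists, and then checking that the vertices produced are genuinely distinct, so that one obtains paths rather than mere walks. Distinctness of the even-indexed vertices is automatic, since their overlaps with $B$ strictly increase by $r$ at each double-step; the slack $r\geq 1$ built into the estimates above can likewise be used to select the intermediate vertices $D$ distinct from the rest, which I would verify explicitly to complete the argument.
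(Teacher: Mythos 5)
Your overall route is the same as the paper's: both proofs build the even path by raising the intersection with $B$ by $r=n-2k$ every two steps, and both obtain the odd path by adjoining a single vertex built from $B\setminus A$ (respectively $A\setminus B$) padded with $c$ points of $(A\cup B)^c$ and concatenating an even path --- your reduction is the mirror image of the paper's. The difference is purely in execution: the paper writes down the whole path at once, partitioning $A\setminus B$ and $B\setminus A$ into blocks $X_1,\ldots,X_l$ and $Y_1,\ldots,Y_l$ of size $n-2k$ and swapping one block per double-step, whereas you proceed greedily with a counting argument at each step. (As an aside, the paper's explicit formulas contain an indexing slip --- as written, $A_0$ and $A_1$ both contain $X_1$ --- so your incremental formulation is arguably the safer way to organise the same idea.)

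There is, however, one real gap: the exactness claim $|E\cap B|=\min(k,j+r)$ in your two-step estimate is asserted, not proved. Your counting shows that $D$ and $E$ exist and that $E\cap B\supseteq B_2$, but nothing you wrote prevents the $k-m$ free elements of $E$ from landing in $B\setminus B_2$, and exactness is precisely what your length count (``first reaches $k$ exactly when $t=\lceil(k-c)/r\rceil$'') and your distinctness claim (``overlaps strictly increase by $r$'') rest on. The claim is in fact true for every admissible choice, but for a reason worth making explicit: when $m=j+r\leq k$ your inequality is an equality, $|C^c\cap B_2^c|=(2k+r)-(k+m-j)=k$, so $D$ is \emph{forced} to be all of $C^c\cap B_2^c$; since $B\setminus B_2$ is disjoint from $C$ (because $C\cap B\subseteq B_2$) and from $B_2$, it lies inside $D$, and then $E\subseteq D^c$ gives $E\cap B=B_2$ exactly. (When $m=k$ one has $E=B$ and there is nothing to prove.) The same observation undercuts your closing remark: in the non-saturated steps there is no ``slack in choosing $D$'' at all. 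Distinctness of the intermediate vertices must instead come from structure --- for instance, every even vertex lies in $A\cup B$ because $E\subseteq D^c=C\cup B_2$, while every forced $D$ contains the nonempty set $(A\cup B)^c$, and the sets $D\cap B=B\setminus B_2$ strictly shrink along the path --- with a genuine choice needed only in a saturated final step, e.g.\ in the odd-path construction you must choose the last intermediate vertex different from $B$, or else appending the edge to $B$ produces a walk rather than a path. So the skeleton is right and matches the paper, but the exactness of the increment and the path-not-walk verification both need to be written out before the argument is complete.
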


\begin{proof}
Let $A$ and $B$ be two vertices of $KG(n,k)$, $C=A \cap B$, $|C|=c$,  $D=(A \cup B)^c$, and $|D|=n-2k+c.$ Let $X_1, \ldots , X_{l}$ and $Y_1, \ldots, Y_{l}$ with $l=\lceil\frac{k-s}{n-2k}\rceil$ be partitions of $A \setminus B$ and $B\setminus A$, respectively, in sets of size $n-2k$, perhaps except the last one.

Let $A_{2i-1}= (\cup_{j=1}^{i}X_j)\cup (\cup_{j=i+1}^{l}Y_j) \cup D'$ for some $D'\subset D$ of cardinality $c$ and $A_{2i}= (\cup_{j=1}^{i} Y_j)\cup (\cup_{j=i+1}^{l} X_j) \cup C$ for $0\leq i\leq l.$ Clearly $A_j \cap A_{j+1}= \emptyset$, $A = A_0$, $B =A_{2l}$, thus $A, A_1 \ldots, A_{2l-1}, B$ is a path of even length $2\lceil\frac{k-c}{n-2k}\rceil$.

For the odd path, take $D'\subset D$ of cardinality $c$ and let $A'=B\setminus A \cup D'$. Then $A$ and $A'$ are adjacent, and we may construct an even path of size $2\lceil\frac{c}{n-2k}\rceil$ between $A'$ and $B$ as before, given that $|A' \cap B|=k-c,$ and the result follows. 
\end{proof}

%\begin{theorem} \cite{Valencia}\label{KGdiam} The diameter of a Kneser graph, $KG(n,k)$ is $\lceil\frac{k-1}{n-2k}\rceil +1.$ \end{theorem}

\subsection{Incidence geometry}

An {\em incidence system} \cite{BuekCohen}, $\Gamma := (X, *, t, I)$ is a 4-tuple such that
\begin{itemize}
\item $X$ is a set whose elements are called the {\em elements} of $\Gamma$;
\item $I$ is a set whose elements are called the {\em types} of $\Gamma$;
\item $t:X\rightarrow I$ is a {\em type function}, associating to each element $x\in X$ of $\Gamma$ a type $t(x)\in I$;
\item $*$ is a binary relation on $X$ called {\em incidence}, that is reflexive, symmetric and such that for all $x,y\in X$, if $x*y$ and $t(x) = t(y)$ then $x=y$.
\end{itemize}
The {\em incidence graph} of $\Gamma$ is the graph whose vertex set is $X$ and where two vertices are joined provided the corresponding elements of $\Gamma$ are incident. 

A {\em flag} is a set of pairwise incident elements of $\Gamma$, i.e. a clique of its incidence graph.
The {\em type} of a flag $F$ is $\{t(x) : x \in F\}$.
A {\em chamber} is a flag of type $I$.
An element $x$ is {\em incident} to a flag $F$ and we write $x*F$ for that, when $x$ is incident to all elements of $F$.
An incidence system $\Gamma$ is a {\em geometry} or {\em incidence geometry} if every flag of $\Gamma$ is contained in a chamber (or in other words, every maximal clique of the incidence graph is a chamber).
The {\em rank} of $\Gamma$ is the number of types of $\Gamma$, namely the cardinality of $I$.

Observe that the incidence graph of an incidence system of rank $n$ is an $n$-partite graph. This will play a key role in the construction of an infinite family of locally X graphs.

Let $\Gamma:= (X, *, t, I)$ be an incidence system.
% and $F$ a flag of $\Gamma$.
Given $J\subseteq I$, the {\em $J$--truncation} of $\Gamma$ is the incidence system $\Gamma^J := (t^{-1}(J), *_{\mid t^{-1}(J)\times t^{-1}(J)}, t_{\mid J}, J)$. In other words, it is the subgeometry constructed from $\Gamma$ by taking only elements of type $J$ and restricting the type function and incidence relation to these elements.

Let $\Gamma:= (X, *, t, I)$ be an incidence system.
% and $F$ a flag of $\Gamma$.
Given a flag $F$ of $\Gamma$, the {\em residue} of $F$ in $\Gamma$ is the incidence system $\Gamma_F := (X_F, *_F, t_F, I_F)$ where
\begin{itemize}
\item $X_F := \{ x \in X : x * F, x \not\in F\}$;
\item $I_F := I \setminus t(F)$;
\item $t_F$ and $*_F$ are the restrictions of $t$ and $*$ to $X_F$ and $I_F$.
\end{itemize}

An incidence system $\Gamma$ is {\em residually connected} when each residue of rank at least two of $\Gamma$ has a connected incidence graph. It is called {\em firm} (resp. {\em thick}) when every residue of rank one of $\Gamma$ contains at least two (resp. three) elements.  

Let $\Gamma:=(X,*, t,I)$ be an incidence system.
An {\em automorphism} of $\Gamma$ is a mapping
$\alpha:(X,I)\rightarrow (X,I):(x,t(x)) \mapsto (\alpha(x),t(\alpha(x))$
where
\begin{itemize}
\item $\alpha$ is a bijection on $X$ inducing a bijection on $I$;
\item for each $x$, $y\in X$, $x*y$ if and only if $\alpha(x)*\alpha(y)$;
\item for each $x$, $y\in X$, $t(x)=t(y)$ if and only if $t(\alpha(x))=t(\alpha(y))$.
\end{itemize}
An automorphism $\alpha$ of $\Gamma$ is called {\em type preserving} when for each $x\in X$, $t(\alpha(x))=t(x)$.
The set of all automorphisms of $\Gamma$ together with the composition forms a group that is called the {\em automorphism group} of $\Gamma$ and denoted by $Aut(\Gamma)$.
The set of all type-preserving automorphisms of $\Gamma$ is a subgroup of $Aut(\Gamma)$ that we denote by $Aut_I(\Gamma)$.
An incidence system $\Gamma$ is {\em flag-transitive} if $Aut_I(\Gamma)$ is transitive on all flags of a given type $J$ for each type $J \subseteq I$.

A rank two geometry with points and lines is called a {\em generalised digon} if every point is incident to every line and, it is called a {\em partial linear space} if there is at most one line through any pair of points.
An incidence geometry is said to satisfy the {\em intersection property of rank 2}, denoted by $(IP)_2$, when all its rank two residues are either partial linear spaces or generalised digons.

Let $\Gamma$ be a firm, residually connected and flag-transitive geometry.  The \emph{Buekenhout diagram} of $\Gamma$ is a graph whose vertices are the elements of $I$ and with 
an edge $\{i,j\}$ with label $d_{ij} - g_{ij} - d_{ji}$ whenever every residue of type $\{i,j\}$ is not a generalised digon. The number $g_{ij}$ is called the {\em gonality} and is equal to half the girth of the incidence graph of a residue of type $\{i,j\}$. The number $d_{ij}$ is called the $i$-diameter of a residue of type $\{i,j\}$ and is the longest distance from an element of type $i$ to any element in the incidence graph of the residue.
Moreover, to every vertex $i$ is associated a number $s_i$, called the $i$-order, which is equal to the size of a residue of type $i$ minus one, and a number $n_i$ which is the number of elements of type $i$ of the geometry.

The Petersen graph, for instance, can be seen as a geometry of rank two whose elements are the vertices and edges of the graph. Its Buekenhout diagram is the following.

\begin{center}
\begin{picture}(90,60)
\put(0,40){\circle{10}}
\put(85,40){\circle{10}}
\put(5,40){\line(1,0){75}}
% diametres et gonalite
\put(20,45){$5$}
\put(40,45){$5$}
\put(60,45){$6$}
\put(-3,25){1}
\put(-5,10){10}
\put(82,25){2}
\put(80,10){15}
\end{picture}
\end{center}

Let $G$ be a graph. Denote its set of vertices (resp. edges) by $G_0$ (resp. $G_1$). For distinct $p$, $q \in G_0$, we say that $p$ and $q$ are {\em adjacent} -- and we write $p \sim q$ -- whenever $\{p, q\} \in G_1$.
As in~\cite{LPL2000}, to the graph $G$, we associate a new rank 2 geometry $\tilde G$, called the {\em neighborhood geometry} of $G$, whose elements are, roughly speaking, the vertices and the neighborhoods of vertices of $G$. 
More precisely, we define $\tilde G$ to be the geometry $(G_0 \times \{0\} \cup G_0 \times \{1\}, \tilde *, \tilde t, \{0,1\})$ with
\begin{itemize}
    \item $\tilde t(G_0\times \{i\}) = i$, for $i = 0, 1$;
    \item $(p,0) \tilde * (q,1)$ iff $p \sim q$, for $p, q \in G_0$.
\end{itemize}

As pointed out in~\cite[Table 1]{LPL2000}, the neighborhood  geometry of the Petersen graph is Desargues' configuration.
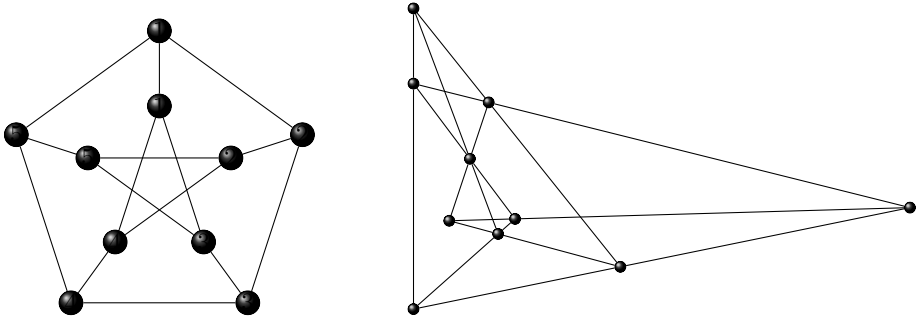
\begin{figure}
\begin{tikzpicture}[every node/.style={ball color=black, circle, draw=black, inner sep=0.01cm}]
  \graph[clockwise, radius=2cm] {subgraph C_n [n=5,name=A]};
  \graph[clockwise, radius=1cm] {subgraph I_n [n=5,name=B]};

  \foreach \i in {1,2,3,4,5}{\draw (A \i) -- (B \i);}
  \newcounter{j}
  \foreach \i in {1,2,3,4,5}{%
  \pgfmathsetcounter{j}{ifthenelse(mod(\i+2,5),mod(\i+2,5),5)}
  \draw (B \i) -- (B \thej);
  }
\end{tikzpicture}
\hspace{1cm}
\begin{tikzpicture}
\tikzstyle{point1}=[ball color=black, circle, draw=black, inner sep=0.05cm]
\node (v1) at (0,4) [point1] {};
\node (v2) at (0,3) [point1] {};
\node (v3) at (1,2.75) [point1] {};
\node (v4) at (0.75,2) [point1] {};
\node (v5) at (0,0) [point1] {};
\node (v6) at (2.75,4-2.75*1.25) [point1] {};
\node (v7) at (1.5*1.5/2,4-1.5*2) [point1] {};
\draw (v1) -- (v2) -- (v5);
\draw (v1) -- (v3) -- (v6);
\draw (v1) -- (v4) -- (v7);
\draw (v2) -- (v3) -- (v4) -- (v2);
\draw (v5) -- (v6) -- (v7) -- (v5);
\node (v8) at (intersection of v2--v3 and v5--v6) [point1] {};
\node (v9) at (intersection of v2--v4 and v5--v7) [point1] {};
\node (v10) at (intersection of v3--v4 and v6--v7) [point1] {};
\draw (v3) -- (v8) -- (v6);
\draw (v4) -- (v9) -- (v7);
\draw (v4) -- (v10) -- (v7);
\draw (v8) -- (v9) -- (v10);
\end{tikzpicture}
\caption{Petersen graph $KG(5,2)$ and Desargues' configuration $\tilde{KG}(5,2)$}\label{PetDes}
\end{figure}
Figure~\ref{PetDes} gives the Petersen graph and Desargues' configuration.
The Buekenhout diagram of Desargues' configuration is the following.
\begin{center}
\begin{picture}(90,60)
\put(0,40){\circle{10}}
\put(85,40){\circle{10}}
\put(5,40){\line(1,0){75}}
% diametres et gonalite
\put(20,45){$5$}
\put(40,45){$3$}
\put(60,45){$5$}
\put(-3,25){2}
\put(-5,10){10}
\put(82,25){2}
\put(80,10){10}
\end{picture}
\end{center}

\section{The neighborhood geometry of a Kneser graph} \label{sec:Kneser}

In this section, we compute the neighborhood geometry of a given Kneser graph. These geometries will then be used in the next section to construct locally X graphs as incidence graphs of some particular incidence geometries. The incidence graphs of these geometries are sometimes called the bipartite Kneser graphs.

%\begin{lemma}\label{lem:partone}
%The $0$--diameter and $1$--diameter of $\tilde{KG}(n,k)$ is $3$ for $n\geq 3k$.
%\end{lemma}

%\begin{proof}
%As the construction of $\tilde{KG}(n,k)$ is symmetric in the set of types, the $0$--diameter and the $1$--diameter are the same.
%By Theorem~\ref{KGdiam}, the diameter of $KG(n,k)$ is 2 in this case and therefore the $0$--diameter is at most 3. It cannot be less than 3 as, for instance, $(\{1,\ldots, k\},0)$
%and $(\{1,\ldots, k\},1)$ are not incident.
%\end{proof}

\begin{lemma}\label{lem:diameter} The $0$--diameter and $1$--diameter of $\tilde{KG}(n,k)$ is $2 \lceil\frac{k}{n-2k}\rceil +1.$
\end{lemma}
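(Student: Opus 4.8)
The plan is to identify the incidence graph of $\tilde{KG}(n,k)$ with a familiar object and then reduce both diameters to statements about walks in the Kneser graph itself. Writing $A^0$ for $(A,0)$ and $A^1$ for $(A,1)$, the incidence relation gives an edge $A^0 \sim B^1$ precisely when $A \cap B = \emptyset$, i.e. precisely when $A$ and $B$ are adjacent in $KG(n,k)$. Thus the incidence graph of $\tilde{KG}(n,k)$ is exactly the bipartite double cover of $KG(n,k)$. Lifting and projecting along this cover, a path of length $\ell$ in the incidence graph starting at $A^0$ projects to a walk of length $\ell$ in $KG(n,k)$ starting at $A$, and its endpoint has type $0$ or $1$ according as $\ell$ is even or odd. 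Consequently $d(A^0, C^0)$ equals the length of a shortest even walk from $A$ to $C$ in $KG(n,k)$, while $d(A^0, B^1)$ equals the length of a shortest odd walk from $A$ to $B$. The $0$-diameter is then the maximum of these two quantities over all choices of $A$, $B$, $C$.

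For the upper bound I would feed Lemma~\ref{lem:paths} into this correspondence. If $|A \cap C| = c$, the even path furnished by that lemma lifts to a path from $A^0$ to $C^0$ of length $2\lceil\frac{k-c}{n-2k}\rceil \le 2\lceil\frac{k}{n-2k}\rceil$; if $|A \cap B| = c$, the odd path lifts to a path from $A^0$ to $B^1$ of length $2\lceil\frac{c}{n-2k}\rceil + 1 \le 2\lceil\frac{k}{n-2k}\rceil + 1$, using $0 \le c \le k$ in both estimates (the lift of a path is again a path, since the two fibres are distinguished by parity). Hence no vertex lies at distance more than $2\lceil\frac{k}{n-2k}\rceil + 1$ from a type-$0$ vertex.

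For the matching lower bound I would exploit the pair $A^0$, $A^1$ arising from a single $k$-set $A$. Under the correspondence, $d(A^0, A^1)$ is the length of a shortest odd closed walk through $A$ in $KG(n,k)$, and a shortest odd closed walk is an odd cycle; since $KG(n,k)$ is vertex-transitive under the natural $\mathrm{S}_n$-action, every vertex lies on a smallest odd cycle, so this length equals the length of the smallest odd cycle. By Lemma~\ref{lem:oddcycle} that length is $2\lceil\frac{k}{n-2k}\rceil + 1$, whence $d(A^0, A^1) = 2\lceil\frac{k}{n-2k}\rceil + 1$, meeting the upper bound. This settles the $0$-diameter; the $1$-diameter follows immediately from the type-swapping automorphism $A^0 \leftrightarrow A^1$ of the incidence graph, which preserves incidences and interchanges the two types.

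The main obstacle I anticipate is not any individual estimate but making the walk-parity dictionary precise: one must argue cleanly that distances in the bipartite double cover are exactly shortest \emph{walks} of the prescribed parity (rather than shortest paths), and that a shortest odd closed walk realises the smallest odd cycle at every vertex. Once that dictionary is in place, Lemmas~\ref{lem:paths} and~\ref{lem:oddcycle} supply the two inequalities almost mechanically.
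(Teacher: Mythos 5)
Your proposal is correct and follows essentially the same route as the paper: the upper bound comes from Lemma~\ref{lem:paths}, the lower bound from the pair $(A,0),(A,1)$ and Lemma~\ref{lem:oddcycle}, and the type-swapping symmetry identifies the $0$-- and $1$--diameters. The only difference is one of exposition: you spell out the bipartite-double-cover and walk-parity dictionary that the paper's (much terser) proof leaves implicit, which is a genuine improvement in rigour but not a different argument.
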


\begin{proof}
As the construction of $\tilde{KG}(n,k)$ is symmetric in the set of types, the $0$--diameter and the $1$--diameter are the same.

By Lemma \ref{lem:paths} the distance between any two vertices of $\tilde{KG}(n,k)$ is at most $2\lceil\frac{k-c}{n-2k}\rceil$ or $2\lceil\frac{c}{n-2k}\rceil+1,$ for some $0\leq c \leq k.$ This achieves a maximum of $2 \lceil\frac{k}{n-2k}\rceil +1$ when $c=k$, that is, when we are tracing a path from the two copies of the same vertex in $\tilde{KG}(n,k)$.

We now argue that this bound can't be improved, as such improvement would contradict Lemma \ref{lem:oddcycle}.
\end{proof}

\begin{lemma}\label{lem:gonality}
The gonality of $\tilde{KG}(n,k)$ is $3$ when $n=2k+1$ and $2$ when $n\geq 2k+2$.
\end{lemma}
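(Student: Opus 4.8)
The gonality of the rank-$2$ geometry $\tilde{KG}(n,k)$ equals half the girth of its incidence graph $B$, namely the bipartite graph with parts $V\times\{0\}$ and $V\times\{1\}$, where $V$ is the set of $k$-subsets of $[n]$, and with $(p,0)$ adjacent to $(q,1)$ exactly when $p\cap q=\emptyset$. Since $B$ is bipartite it has no odd cycles, so its girth is even and at least $4$; the whole statement therefore reduces to deciding, for each value of $n$, whether $B$ contains a $4$-cycle, and if not, producing a $6$-cycle. My plan is to translate both questions into elementary statements about $k$-subsets of $[n]$.

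First I would analyse $4$-cycles. A $4$-cycle in $B$ is a configuration $(p,0),(q,1),(p',0),(q',1)$ with $p\neq p'$, $q\neq q'$ and all four ``cross'' pairs disjoint; this amounts to choosing two distinct $k$-subsets $q,q'$ inside the complement $[n]\setminus(p\cup p')$. Such a pair exists precisely when $|[n]\setminus(p\cup p')|\geq k+1$. Since distinct $p,p'$ satisfy $|p\cup p'|\geq k+1$, with equality attainable (take $p,p'$ differing in a single element), the largest possible complement has size $n-k-1$. Hence a $4$-cycle exists if and only if $n-k-1\geq k+1$, i.e. if and only if $n\geq 2k+2$. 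This immediately gives girth $4$, hence gonality $2$, for all $n\geq 2k+2$.

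When $n=2k+1$ the same count shows $|[n]\setminus(p\cup p')|\leq k$ for distinct $p,p'$, so no two distinct $k$-subsets fit in the complement and $B$ has no $4$-cycle; therefore its girth is at least $6$. It then remains to exhibit a single $6$-cycle, which pins the girth at exactly $6$ and the gonality at $3$. Here I would use that, when $n=2k+1$, the neighbours of a vertex $p$ are exactly the $k$-subsets of the $(k+1)$-set $p^{c}$, and write down an explicit alternating sequence $p_1,q_1,p_2,q_2,p_3,q_3$ of $k$-subsets with consecutive members disjoint and closing cyclically back to $p_1$, for instance starting from $p_1=\{1,\dots,k\}$ and successively swapping one element at a time, then verify the six disjointness relations.

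The main obstacle is precisely this last step: producing the $6$-cycle and checking that it is a genuine cycle rather than a degenerate closed walk. The disjointness conditions alone are easy, but one must simultaneously guarantee that $p_1,p_2,p_3$ are pairwise distinct, that $q_1,q_2,q_3$ are pairwise distinct, and that the closing edge $p_1\cap q_3=\emptyset$ holds; a careless choice collapses into a shorter closed walk. Everything else---the reduction to the girth and the $4$-cycle count---is routine counting.
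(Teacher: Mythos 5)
Your reduction to the girth of the bipartite incidence graph and your four-cycle analysis are correct, and the latter is actually sharper than the paper's own proof: the paper merely exhibits a $4$-cycle when $n>2k+1$ and a $6$-cycle when $n=2k+1$, but never verifies that no $4$-cycle exists in the second case, which is needed to conclude that the gonality is exactly $3$ rather than at most $3$. Your observation that a $4$-cycle forces two distinct $k$-subsets $q\neq q'$ into $[n]\setminus(p\cup p')$, a set of size at most $n-k-1$ (which equals $k$ when $n=2k+1$), closes that hole cleanly.

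The one genuine gap is the step you flag yourself: for $n=2k+1$ you never actually produce the $6$-cycle, and without it you have only shown gonality $\geq 3$, not $=3$ (a priori the girth could be $8$ or larger). Your plan --- start at $\{1,\ldots,k\}$ and swap one element at a time --- does work, and is precisely what the paper writes down:
\[
\{1,\ldots,k\},\ \{k+1,\ldots,2k\},\ \{1,\ldots,k-1,2k+1\},\ \{k,\ldots,2k-1\},\ \{1,\ldots,k-1,2k\},\ \{k+1,\ldots,2k-1,2k+1\},
\]
read alternately as elements of type $0$ and type $1$ and closing back to $\{1,\ldots,k\}$. Consecutive sets are disjoint (six easy checks), the three sets landing in each part of the bipartition are pairwise distinct, so this is a genuine $6$-cycle and not a degenerate closed walk. (For $k=1$ the subsets $\{1,\ldots,k-1,2k+1\}$ and $\{k+1,\ldots,2k-1,2k+1\}$ both degenerate to $\{2k+1\}$, but they sit on opposite sides of the bipartition, so the cycle survives even there.) With this one display and its verification added, your argument is complete, and taken as a whole it is tighter than the paper's.
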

\begin{proof}
If $n=2k+1$, the following is a circuit of length 6 in the incidence graph of $\tilde{KG}(n,k)$: $(\{1,\ldots, k\},0)
\tilde{*}(\{k+1,\ldots, 2k\},1)\tilde{*}(\{1,\ldots, k-1,2k+1\},0)\tilde{*}(\{k,\ldots, 2k-1\},1)\tilde{*}(\{1,\ldots, k-1,2k\},0)\tilde{*}(\{k+1,\ldots, 2k-1,2k+1\},1)\tilde{*}(\{1,\ldots, k\},0)$.

If $n>2k+1$, the following is a circuit of length 4 in the incidence graph of $\tilde{KG}(n,k)$: $(\{1,\ldots, k\},0)
\tilde{*}(\{k+1,\ldots, 2k\},1)\tilde{*}(\{1,\ldots, k-1,2k+1\},0)\tilde{*}(\{k+1,\ldots, 2k-1,2k+2\},1)\tilde{*}(\{1,\ldots, k\},0)$.
\end{proof}

\begin{lemma}\label{lem:connected}
$\tilde{KG}(n,k)$ is a connected graph.
\end{lemma}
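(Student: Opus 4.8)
The plan is to translate connectivity of the incidence graph of $\tilde{KG}(n,k)$ into the existence of suitable walks inside $KG(n,k)$ itself, and then to invoke Lemma~\ref{lem:paths}. First I would record the bipartite structure: the vertices of the incidence graph are the pairs $(A,0)$ and $(B,1)$, where $A,B$ range over the $k$-subsets of $[n]$, and by definition of the neighborhood geometry an edge $(A,0)\,\tilde{*}\,(B,1)$ occurs exactly when $A\cap B=\emptyset$, i.e.\ exactly when $A\sim B$ in $KG(n,k)$. Consequently, every walk in the incidence graph projects onto a walk in $KG(n,k)$, and conversely any walk $A=C_0,C_1,\ldots,C_m=B$ in $KG(n,k)$ lifts to a walk in the incidence graph once the type of its starting vertex is fixed, the type simply alternating at each step. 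Since the incidence graph is bipartite with the two type-classes as its parts, a walk of length $m$ issuing from a type-$i$ vertex ends at type $i$ when $m$ is even and at type $1-i$ when $m$ is odd.

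With this correspondence in place, I would apply Lemma~\ref{lem:paths} directly. For any two $k$-subsets $A,B$ with $|A\cap B|=c$, that lemma furnishes both an even path (of length $2\lceil\frac{k-c}{n-2k}\rceil$) and an odd path (of length $2\lceil\frac{c}{n-2k}\rceil+1$) joining them in $KG(n,k)$. Lifting the even path connects $(A,i)$ to $(B,i)$, while lifting the odd path connects $(A,i)$ to $(B,1-i)$; carrying this out over the four choices of types shows that \emph{any} two vertices of the incidence graph lie in a common component, whence the graph is connected. The delicate instance is $A=B$, so that $c=k$: there the even path is trivial, while the odd path of length $2\lceil\frac{k}{n-2k}\rceil+1$ is precisely what joins the two copies $(A,0)$ and $(A,1)$ of a single vertex, which is exactly the link the bipartite halves need in order not to split apart.

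I do not expect a genuine obstacle here, since Lemma~\ref{lem:paths} supplies both parities of path and therefore does essentially all the work; the only care required is the bookkeeping of types and parity in the lifting correspondence, so that path endpoints and their types are recorded correctly. The one point worth flagging is that the standing hypothesis $n>2k$ is essential: Lemma~\ref{lem:paths} has $n-2k$ in its denominators, and for $n=2k$ the Kneser graph is a perfect matching, so each $(A,0)$ is adjacent only to $(A^{c},1)$ and the incidence graph degenerates into a disjoint union of edges. Under $n>2k$, however, the lifted even and odd paths give connectivity immediately.
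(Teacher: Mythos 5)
Your proof is correct, but it takes a different route from the paper's. The paper's proof is a one-liner: it invokes Lemma~\ref{lem:oddcycle} (existence of odd cycles in $KG(n,k)$) and implicitly relies on the standard fact that the bipartite double cover of a connected graph is connected if and only if the graph is non-bipartite -- the incidence graph of $\tilde{KG}(n,k)$ being exactly that double cover. You instead invoke Lemma~\ref{lem:paths}: since that lemma supplies a path of \emph{each} parity between any two vertices of $KG(n,k)$, the lifting correspondence you set up connects $(A,i)$ to $(B,j)$ for every choice of types $i,j$, with the case $A=B$, $c=k$ handled by the odd path joining the two copies of a single vertex. The comparison is instructive: the paper's argument is shorter but leaves two things implicit -- the connectivity of $KG(n,k)$ itself and the double-cover criterion -- whereas your argument is self-contained, makes the type/parity bookkeeping explicit, and moreover yields quantitative distance bounds for free (it is essentially the same computation that underlies Lemma~\ref{lem:diameter}). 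Your observation that $n>2k$ is essential (for $n=2k$ the Kneser graph is a perfect matching and the incidence graph splits into disjoint edges) is a worthwhile remark that the paper handles only via its standing assumption $n\geq 2k+1$.
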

\begin{proof}
This follows from the fact that $KG(n,k)$ has cycles of odd length as proven in Lemma~\ref{lem:oddcycle}.
\end{proof}
\section{A new family of locally X graphs}\label{sec:locallyX}

Take a Kneser graph $KG(n,k)$. Its neighborhood geometry $\tilde{KG}(n,k)$ can be used to construct infinitely many locally X graphs using incidence geometry.

\begin{construction}\label{construction}
For any positive integer $r\geq 2$ and a given Kneser graph $KG(n,k)$, define a rank $r$ incidence system $\Gamma(KG(n,k),r) := (X,*,t,I)$ as follows.
Let $\Omega := \{1, \ldots, n+k(r-2)\}$. Let $I:=\{1,\ldots, r\}$.
Take $r$ copies $X_1, \ldots, X_r$ of all the subsets of size $k$ of $\Omega$, and let $X=X_1\cup \ldots \cup X_r$.
For any $x\in X_i$, define $t(x) = i$.
For any elements $x_i\in X_i$ and $x_j\in X_j$, we say that $x_i*x_j$ if and only if $i\neq j$ and $x_i$ and $x_j$ are disjoint as subsets of $\Omega$.
\end{construction}

Observe that the case $r=2$ in Construction~\ref{construction} gives the neighbourhood geometry $\tilde{KG}(n,k)$ of the Kneser graph $KG(n,k)$ that was defined in the previous section.

\begin{lemma}
$\Gamma(KG(n,k),r)$ is an incidence geometry.
\end{lemma}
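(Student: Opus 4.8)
The plan is to verify directly the definition of an incidence geometry: namely that $\Gamma(KG(n,k),r)$ is an incidence system and that every flag is contained in a chamber. The first part is essentially bookkeeping. With the usual convention that incidence is reflexive, the only axiom needing attention is that two incident elements of equal type must coincide; but Construction~\ref{construction} declares $x_i * x_j$ only when $i \neq j$, so two distinct elements of a single type $X_i$ are never incident and the axiom holds vacuously. The type function $t$, the type set $I$, and the symmetry of $*$ (disjointness of subsets is symmetric) are immediate, so $\Gamma(KG(n,k),r)$ is an incidence system.

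First I would describe its flags. A flag is a clique of the incidence graph, i.e.\ a set of pairwise incident elements. Since incidence forces distinct types, a flag $F$ contains at most one element of each type, and, viewing its elements as $k$-subsets of $\Omega$, these subsets are pairwise disjoint. Writing $J := t(F) \subseteq I$ for the set of types occurring in $F$, the elements of $F$ are $|J|$ pairwise disjoint $k$-subsets, occupying exactly $|J|k$ points of $\Omega$. A chamber is then a flag with $J = I$, that is, $r$ pairwise disjoint $k$-subsets, one of each type.

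The heart of the argument is to extend an arbitrary flag $F$ of type $J$ to a chamber. We must adjoin, for each type $i \in I \setminus J$, a $k$-subset of type $i$ disjoint from every set already chosen; equivalently we must find $r - |J|$ further pairwise disjoint $k$-subsets inside the set $\Omega \setminus \bigcup F$ of unused points. Now
\[
\left| \Omega \setminus \bigcup F \right| = \bigl(n + k(r-2)\bigr) - |J|k = (n-2k) + (r-|J|)k \geq (r-|J|)k,
\]
using $n \geq 2k$, which holds for any Kneser graph $KG(n,k)$ possessing an edge. Hence at least $(r-|J|)k$ unused points remain, enough to carve out $r-|J|$ pairwise disjoint $k$-subsets greedily; assigning them the missing types yields a chamber containing $F$.

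This also clarifies the choice $|\Omega| = n + k(r-2)$: it makes $|\Omega| = rk + (n-2k) \geq rk$, exactly large enough that $r$ pairwise disjoint $k$-sets always fit. The only point requiring care is that the counting inequality must survive each stage of the greedy extension, but since the total capacity $rk$ never exceeds $|\Omega|$, free points remain until all $r$ types have been filled; there is no genuine obstacle beyond this elementary count. As every flag extends to a chamber, $\Gamma(KG(n,k),r)$ is an incidence geometry.
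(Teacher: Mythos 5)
Your proof is correct and follows essentially the same approach as the paper: both rest on the count $|\Omega| = n + k(r-2) \geq rk$, which guarantees that any flag's pairwise disjoint $k$-subsets can be extended greedily by further disjoint $k$-subsets until every type is represented. Your version merely spells out the flag-extension bookkeeping (and the incidence-system axioms) that the paper's two-sentence proof leaves implicit.
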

\begin{proof}
As $\mid \Omega \mid = n+k(r-2)$ and $n\geq 2k+1$, it is always possible to find $r$ pairwise disjoint subsets of size $k$ in $\Omega$. Hence every maximal flag of $\Gamma$ must be a chamber and $\Gamma$ is an incidence geometry.
\end{proof}
\begin{lemma}\label{lem:ft}
The symmetric group $S_\Omega \cong S_{n+k(r-2)}$ acts transitively on the chambers of $\Gamma(KG(n,k),r)$ (or in other words, $\Gamma(KG(n,k),r)$ is flag-transitive).
\end{lemma}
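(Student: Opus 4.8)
The plan is to exhibit $S_\Omega$ explicitly as a group of type-preserving automorphisms of $\Gamma(KG(n,k),r)$ and then to prove it is transitive on chambers; flag-transitivity in the sense of the definition given above will follow from a standard reduction. First I would record how $S_\Omega$ acts: each $\sigma \in S_\Omega$ induces a map on $X$ sending an element $A \in X_i$ (a $k$-subset of $\Omega$ carrying the type label $i$) to $\sigma(A) \in X_i$. This map fixes every type, and since $\sigma$ is a bijection of $\Omega$ it carries disjoint $k$-subsets to disjoint $k$-subsets, so by the incidence rule of Construction~\ref{construction} it preserves $*$. Hence $S_\Omega \leq Aut_I(\Gamma(KG(n,k),r))$.

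Next I would describe the chambers. A chamber is a flag of type $I = \{1, \ldots, r\}$, hence a set $\{A_1, \ldots, A_r\}$ with $A_i \in X_i$ that are pairwise incident, i.e.\ pairwise disjoint $k$-subsets of $\Omega$; equivalently it is parametrised by the tuple $(A_1, \ldots, A_r)$. Thus $|A_1 \cup \cdots \cup A_r| = rk$, and since $|\Omega| = n + k(r-2)$ with $n \geq 2k+1$ the complement $R := \Omega \setminus (A_1 \cup \cdots \cup A_r)$ has size $n - 2k \geq 1$. Given a second chamber $(B_1, \ldots, B_r)$ with complement $S$, I would build $\sigma \in S_\Omega$ directly: choose any bijection $A_i \to B_i$ for each $i$ (possible as $|A_i| = |B_i| = k$) together with any bijection $R \to S$ (possible as $|R| = |S| = n - 2k$); glueing these yields a permutation of $\Omega$ with $\sigma(A_i) = B_i$ for all $i$, so $\sigma$ carries the first chamber to the second. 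This establishes that $S_\Omega$ is transitive on chambers.

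Finally I would deduce flag-transitivity. Since $S_\Omega \leq Aut_I(\Gamma)$, the full group $Aut_I(\Gamma)$ is a fortiori transitive on chambers, and the passage from chamber-transitivity to transitivity on flags of each type $J \subseteq I$ is the routine step that invokes the geometry property: given flags $F, F'$ of type $J$, the fact that $\Gamma(KG(n,k),r)$ is a geometry (proved in the preceding lemma) lets me extend them to chambers $C \supseteq F$ and $C' \supseteq F'$; an automorphism $g$ with $g(C) = C'$ then satisfies $g(F) = F'$, because $g$ is type-preserving and a chamber contains exactly one element of each type, so the type-$J$ part of $C'$ is uniquely determined. The only point demanding genuine care is precisely this last equivalence — in particular that every flag extends to a chamber — whereas the construction of $\sigma$ itself is immediate once the counting $n - 2k \geq 1$ is in place.
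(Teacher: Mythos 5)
Your proof is correct and follows essentially the same route as the paper: the paper's entire argument is the one-line observation that $S_\Omega$ is $(n+k(r-2))$--transitive on $\Omega$, and your explicit glueing of bijections $A_i \to B_i$ and $R \to S$ is exactly what that multiple transitivity means when unpacked. The extra details you supply (that $S_\Omega$ acts by type-preserving automorphisms, and the standard reduction from chamber-transitivity to flag-transitivity via the geometry property) are left implicit in the paper but are correctly handled.
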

\begin{proof}
This is due to the fact that $S_\Omega$ is $(n+k(r-2))$--transitive on $\Omega$.
\end{proof}
\begin{lemma}
$\Gamma(KG(n,k),r)$ is residually connected.
\end{lemma}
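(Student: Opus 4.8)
The plan is to exploit the self-similar structure of the construction: I would first show that every residue of a flag in $\Gamma(KG(n,k),r)$ is again an incidence system of exactly the same shape, only of smaller rank and over a smaller ground set, and then reduce residual connectedness to the connectedness of the incidence graph of $\Gamma(KG(n,k),r')$ for every $r'\ge 2$.

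First I would analyse an arbitrary flag $F$ of type $J := t(F)\subseteq I$. Since incidence in $\Gamma(KG(n,k),r)$ forces distinct types together with disjointness of the underlying $k$-subsets, the elements of $F$ are $|J|$ pairwise disjoint $k$-subsets of $\Omega$, so $\bigcup_{x\in F}x$ has exactly $k|J|$ points. An element $y$ of type $i\notin J$ lies in the residue $\Gamma_F$ precisely when $y$ is disjoint from every element of $F$, that is, when $y$ is a $k$-subset of $\Omega':=\Omega\setminus\bigcup_{x\in F}x$. As incidence inside $\Gamma_F$ is again disjointness, the residue is exactly the construction of Construction~\ref{construction} carried out over the ground set $\Omega'$ with type set $I\setminus J$. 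A short count gives $|\Omega'| = n+k(r-2)-k|J| = n+k\big((r-|J|)-2\big)$, so $\Gamma_F \cong \Gamma(KG(n,k),\,r-|J|)$; since $|\Omega'|\ge n \ge 2k+1$, this is a legitimate member of the same family.

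Thus residual connectedness amounts to showing that the incidence graph of $\Gamma(KG(n,k),r')$ is connected for every residue of rank $r' = r-|J|\ge 2$. I would handle this by passing to rank-$2$ truncations. For any two types $a\neq b$, the truncation of $\Gamma(KG(n,k),r')$ to $\{a,b\}$ is precisely the neighborhood geometry $\tilde{KG}(m,k)$ with $m=|\Omega'|=n+k(r'-2)$. When $r'=2$ we have $m=n$ and this truncation is the whole residue, already connected by Lemma~\ref{lem:connected}. When $r'\ge 3$ we have $m\ge n+k\ge 3k+1>2k+1$, so $KG(m,k)$ still carries odd cycles by Lemma~\ref{lem:oddcycle}, and the argument of Lemma~\ref{lem:connected} again yields that $\tilde{KG}(m,k)$ is connected. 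Finally I would lift connectedness of a single truncation to the full incidence graph: fixing types $1$ and $2$, all type-$1$ and type-$2$ vertices lie in one connected truncation, and any vertex of another type $c$ is joined to the type-$1$ vertices through the connected truncation on $\{1,c\}$, so the entire incidence graph is connected.

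The main obstacle I anticipate is bookkeeping rather than conceptual: one must verify that the ground set $\Omega'$ surviving in every residue of rank at least two is large enough (namely $|\Omega'|\ge 2k+1$) for the odd-cycle guarantee of Lemma~\ref{lem:oddcycle}, and hence the connectedness argument of Lemma~\ref{lem:connected}, to remain valid. Once the residue is correctly identified as a smaller instance of the same construction, the hypothesis $n\ge 2k+1$ makes this inequality immediate, and the truncation argument then closes the proof.
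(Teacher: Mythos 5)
Your proof is correct, and its second half coincides with the paper's: both reduce connectedness of the incidence graph of $\Gamma(KG(n,k),r')$ to connectedness of its rank-two truncations, each of which is the neighborhood geometry $\tilde{KG}(n+k(r'-2),k)$ and hence connected by Lemma~\ref{lem:connected}. Where you diverge is in handling the lower-rank residues: the paper runs an induction on $r$, invoking the induction hypothesis together with flag-transitivity (Lemma~\ref{lem:ft}) to assert that all proper residues of $\Gamma(KG(n,k),r+1)$ are connected, but it never states explicitly what those residues are. You instead identify the residue of any flag $F$ of type $J$ directly as $\Gamma(KG(n,k),r-|J|)$ over the ground set $\Omega'=\Omega\setminus\bigcup_{x\in F}x$, the count $|\Omega'|=n+k\bigl((r-|J|)-2\bigr)$ showing it is a member of the same family; residual connectedness then follows for all residues at once, with no induction and no appeal to flag-transitivity. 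This buys two things: it makes the self-similarity of Construction~\ref{construction} explicit (the paper records it later, and only for rank-two residues), and it supplies the final gluing step --- that connectedness of the truncations on $\{1,2\}$ and $\{1,c\}$ forces connectedness of the whole $r'$-partite incidence graph --- which the paper asserts without justification. Your attention to the inequality $|\Omega'|\ge 2k+1$, so that Lemma~\ref{lem:oddcycle} still guarantees odd cycles, is also warranted, since the truncations involved are neighborhood geometries of larger Kneser graphs $KG(n+k(r'-2),k)$, not of $KG(n,k)$ itself (the paper's ``$n*k(r-2)$'' is an evident typo for $n+k(r-2)$).
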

\begin{proof}
We prove this by induction on $r$. 
The case $r=2$ is dealt with in Lemma~\ref{lem:connected}.

Suppose that $\Gamma(KG(n,k),r)$ is residually connected.
In order to prove that $\Gamma(KG(n,k),r+1)$ is residually connected, we only need to show that the incidence graph of $\Gamma(KG(n,k),r+1)$ is connected, as all residues of $\Gamma(KG(n,k),r+1)$ of rank $< r+1$ are connected by the induction hypothesis and the fact that $\Gamma(KG(n,k),r+1)$ is flag-transitive as shown in Lemma~\ref{lem:ft}. Take $x_i$ an element of type $i$ and $x_j$ an element of type $j\neq i$. The $\{i,j\}$--truncation of $\Gamma(KG(n,k),r)$ is the neighborhood geometry $\tilde{KG}(n*k(r-2),k)$ of a Kneser graph $KG(n*k(r-2),k)$. By Lemma~\ref{lem:connected}, $\tilde{KG}(n*k(r-2),k)$ is connected. Hence, every rank two truncation of $\Gamma(KG(n,k),r)$ is connected and therefore $\Gamma(KG(n,k),r)$ is connected.
\end{proof}

\begin{lemma}\label{lem:order}
For any $i = 1, \ldots, r$, the $i$--order of $\Gamma(KG(n,k),r)$ is equal to $n-k \choose k$$-1$. 
\end{lemma}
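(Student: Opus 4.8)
The plan is to unwind the definition of the $i$-order given just before the Buekenhout diagram discussion: $s_i$ is the size of a rank-one residue of type $i$ minus one. Concretely, such a residue is obtained by fixing a flag $F$ of type $I \setminus \{i\}$ and counting the elements of type $i$ that are incident to every member of $F$. First I would observe that by flag-transitivity (Lemma~\ref{lem:ft}) the group $S_\Omega$ acts transitively on the flags of type $I \setminus \{i\}$, so the cardinality of this residue does not depend on the chosen flag $F$; hence it suffices to compute it for one convenient choice.

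Next I would describe $F$ explicitly. It consists of $r-1$ pairwise disjoint $k$-subsets of $\Omega$, one of each type in $I \setminus \{i\}$, and together they occupy exactly $k(r-1)$ points of $\Omega$. By the incidence rule of Construction~\ref{construction}, an element of type $i$ incident to $F$ is precisely a $k$-subset of $\Omega$ disjoint from each of the $r-1$ subsets comprising $F$; equivalently, it is a $k$-subset of the $\lvert \Omega \rvert - k(r-1)$ points of $\Omega$ left untouched by $F$.

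The count is then immediate. Since $\lvert \Omega \rvert = n + k(r-2)$, the number of remaining points is $n + k(r-2) - k(r-1) = n - k$, so the residue of type $i$ has exactly $\binom{n-k}{k}$ elements, and subtracting one yields $s_i = \binom{n-k}{k} - 1$ for every $i$. The argument reduces to a single counting step, so there is no real combinatorial obstacle; the only point requiring care is correctly reading ``size of a residue of type $i$'' as the cardinality of a rank-one residue and invoking flag-transitivity so that this cardinality is well defined independently of the flag $F$ chosen.
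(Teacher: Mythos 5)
Your proposal is correct and follows essentially the same argument as the paper: a flag of type $I \setminus \{i\}$ consists of $r-1$ pairwise disjoint $k$-subsets covering $k(r-1)$ points, leaving $n+k(r-2)-k(r-1)=n-k$ points of $\Omega$, so the residue contains exactly $\binom{n-k}{k}$ elements of type $i$. The only difference is cosmetic: you invoke flag-transitivity to justify independence of the chosen flag, whereas the paper's count already applies verbatim to an arbitrary such flag, making that step implicit.
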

\begin{proof}
A flag $F$ of rank $r-1$ consists of $r-1$ pairwise disjoint subsets of size $k$. Hence these subsets cover $k(r-1)$ points of $\Omega$. So there are $n+k(r-2)-k(r-1) = n-k$ points not covered by $F$ in $\Omega$. There are thus $n-k \choose k$ subsets of size $k$ that are disjoint with all subsets of $F$.
\end{proof}
The previous lemma immediately implies the following corollary.
\begin{corollary}
$\Gamma(KG(n,k),r)$ is thick.
\end{corollary}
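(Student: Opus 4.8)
The plan is to deduce thickness directly from Lemma~\ref{lem:order} together with the definition of a thick incidence system given in Section~\ref{back}. Recall that $\Gamma$ is \emph{thick} exactly when every residue of rank one contains at least three elements. A residue of rank one of $\Gamma(KG(n,k),r)$ is the residue $\Gamma_F$ of a flag $F$ with $|t(F)| = r-1$, whose single remaining type is some $i \in I$; since $\Gamma(KG(n,k),r)$ is flag-transitive by Lemma~\ref{lem:ft}, the cardinality of such a residue depends only on $i$ and equals the $i$-order plus one. Hence the statement reduces to a single numerical inequality, the same for every type $i$.

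By Lemma~\ref{lem:order} the $i$-order equals $\binom{n-k}{k}-1$, so each rank one residue has exactly $\binom{n-k}{k}$ elements, and it suffices to verify $\binom{n-k}{k}\geq 3$. First I would use the Kneser-graph hypothesis $n\geq 2k+1$, which gives $n-k\geq k+1$ and therefore
\[
\binom{n-k}{k}\ \geq\ \binom{k+1}{k}\ =\ k+1 .
\]
For every $k\geq 2$ this is already at least $3$, which settles those cases at once.

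The one point requiring a little care is the boundary case $k=1$, where the estimate above only yields $k+1=2$. Here I would either invoke the strict inequality $n>2k+1$ that appears in Lemma~\ref{lem:oddcycle} (forcing $n-k\geq k+2$ and hence $\binom{n-k}{k}\geq\binom{k+2}{k}\geq 3$ uniformly in $k$), or argue directly that $\binom{n-1}{1}=n-1\geq 3$ as soon as $n\geq 4$. I expect this small arithmetic check to be the only real obstacle: once $\binom{n-k}{k}\geq 3$ is in hand, thickness follows immediately from Lemma~\ref{lem:order}, exactly as the remark preceding the corollary suggests.
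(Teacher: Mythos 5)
Your proposal follows the same route as the paper: the paper offers no argument at all beyond the sentence ``The previous lemma immediately implies the following corollary,'' so the arithmetic verification you carry out is exactly the content being left implicit, and your reduction (rank-one residue size $=$ $i$-order plus one $=\binom{n-k}{k}$, so it suffices that $\binom{n-k}{k}\geq 3$) is the intended one. Your care at the boundary is not pedantry, though --- it exposes a real defect in the statement as written. For $k=1$, $n=3$ (i.e.\ $KG(3,1)=K_3$, which satisfies the paper's standing hypothesis $n\geq 2k+1$), one gets $\binom{n-k}{k}=\binom{2}{1}=2$, so every rank-one residue of $\Gamma(KG(3,1),r)$ has exactly two elements: the geometry is firm but \emph{not} thick, and the corollary is false in this one case. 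Of your two proposed fixes, only the second is viable: you cannot invoke $n>2k+1$ uniformly, since that hypothesis belongs to Lemma~\ref{lem:oddcycle} only, and the construction (and the paper's own key example $\Gamma(KG(5,2),3)$, with $n=2k+1$) explicitly allows $n=2k+1$; for $k\geq 2$ the case $n=2k+1$ gives $\binom{k+1}{k}=k+1\geq 3$ and is unproblematic. The honest conclusion is that the corollary holds for all $n\geq 2k+1$ except $(n,k)=(3,1)$, an exception the paper silently overlooks.
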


\begin{lemma}
Every rank two residue of $\Gamma(KG(n,k),r)$ is isomorphic to $\tilde{KG}(n,k)$.
\end{lemma}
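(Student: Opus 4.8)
The plan is to identify an arbitrary rank two residue explicitly as a set of elements with an incidence relation, and then match that data against the definition of the neighborhood geometry $\tilde{KG}(n,k)$ given in Section~\ref{sec:Kneser}. A rank two residue of $\Gamma(KG(n,k),r)$ is the residue $\Gamma_F$ of a flag $F$ whose type is $I\setminus\{i,j\}$ for some pair of distinct types $i,j\in I$, so $F$ has rank $r-2$. By Lemma~\ref{lem:ft} all flags of a given type form a single orbit under $Aut_I(\Gamma)$, and the construction is symmetric in the set of types; hence it suffices to analyse one representative residue, and the resulting isomorphism will transfer to all rank two residues.

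The key step is a point count. Since $F$ consists of $r-2$ pairwise disjoint $k$-subsets of $\Omega$, their union covers exactly $k(r-2)$ points. Setting $\Omega' := \Omega \setminus \bigcup_{x\in F} x$, the crucial arithmetic is
$$|\Omega'| = |\Omega| - k(r-2) = \bigl(n+k(r-2)\bigr) - k(r-2) = n.$$
This is precisely where the choice $|\Omega| = n+k(r-2)$ in Construction~\ref{construction} is designed to pay off: deleting a rank $r-2$ flag frees up exactly $n$ points.

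I would then read off the elements and incidence of $\Gamma_F$. By definition of the residue its type set is $I_F = I\setminus t(F) = \{i,j\}$, so no elements of the types occurring in $F$ survive; and an element of type $i$ (resp. $j$) incident to $F$ is a $k$-subset of $\Omega$ disjoint from every member of $F$, that is, a $k$-subset contained in $\Omega'$. Hence $X_F$ is two copies---one of type $i$, one of type $j$---of the family of all $k$-subsets of the $n$-set $\Omega'$, with two elements of different types incident exactly when they are disjoint. Fixing any bijection $\Omega' \to \{1,\ldots,n\}$ and identifying type $i$ with $0$ and type $j$ with $1$ carries this data onto the definition of $\tilde{KG}(n,k)$, whose type-$0$ and type-$1$ elements are the $k$-subsets of $\{1,\ldots,n\}$ with $(p,0)\,\tilde*\,(q,1)$ iff $p$ and $q$ are disjoint. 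This bijection preserves types and incidence, giving the required isomorphism.

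I do not expect a serious obstacle here: once the point count is in hand the whole content is bookkeeping. The only points requiring a little care are confirming that the residue retains no element of a type in $t(F)$ (handled by the definition of $I_F$) and that ``disjoint from every member of $F$'' is equivalent to ``contained in $\Omega'$'', which is immediate since the members of $F$ partition $\Omega\setminus\Omega'$.
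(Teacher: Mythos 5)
Your proposal is correct and follows essentially the same route as the paper: the crux in both is the count that a rank $r-2$ flag covers $k(r-2)$ points of $\Omega$, leaving exactly $n$ points, so the residue is two copies of the $k$-subsets of an $n$-set with disjointness as incidence, i.e.\ $\tilde{KG}(n,k)$. The paper's proof states only the point count and leaves the identification implicit; your write-up just makes that bookkeeping explicit.
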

\begin{proof}
Every rank two residue is obtained from a flag $F$ that has $r-2$ elements. These $r-2$ elements cover $k(r-2)$ elements of $\Omega$ and therefore there are $n$ elements of $\Omega$ remaining.
\end{proof}

\begin{lemma}
$Aut_I(\Gamma(KG(n,k),r))\cong S_\Omega$ and $Aut(\Gamma(KG(n,k),r))\cong S_\Omega\times S_r$
\end{lemma}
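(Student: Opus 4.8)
The plan is to realise $Aut(\Gamma(KG(n,k),r))$ as an extension of $S_r$ by the type-preserving subgroup $Aut_I(\Gamma(KG(n,k),r))$ and then to identify that extension as a direct product. Throughout write $m := |\Omega| = n + k(r-2)$, and note $m \geq n \geq 2k+1 > 2k$.

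First I would establish the type-preserving part, $Aut_I(\Gamma(KG(n,k),r)) \cong S_\Omega$. Any $\sigma \in S_\Omega$ acts on each copy $X_i$ by $A \mapsto \sigma(A)$; this preserves types and disjointness, hence is a type-preserving automorphism $\alpha_\sigma$, and the resulting homomorphism $S_\Omega \to Aut_I$ is injective because (as $m > 2k$) a $k$-subset is determined by the family of $k$-subsets disjoint from it. The substance is surjectivity. Given $\alpha \in Aut_I$ with components $\alpha_i : X_i \to X_i$, I would recover the Johnson adjacency on each $X_i$ from the geometry: for $A, B \in X_i$ the number of common neighbours in $X_j$ ($j \neq i$) equals $\binom{m - 2k + |A \cap B|}{k}$, a nondecreasing function of $|A\cap B|$; since $m-k-1 \geq k$, its value at $|A\cap B|=k-1$ is strictly larger than at $|A\cap B|=k-2$, so that among distinct pairs the common-neighbour count is maximal precisely for the pairs with $|A \cap B| = k-1$. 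As $\alpha$ preserves adjacency it preserves common-neighbour counts, so each $\alpha_i$ preserves the relation $|A \cap B| = k-1$, i.e.\ is an automorphism of the Johnson graph $J(m,k)$. Because $m > 2k$, we have $\mathrm{Aut}(J(m,k)) = S_m \cong S_\Omega$, so $\alpha_i$ is induced by some $\sigma_i \in S_\Omega$.

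It then remains to see that the $\sigma_i$ coincide. Comparing incidences across two types gives $A \cap B = \emptyset \iff \sigma_i(A) \cap \sigma_j(B) = \emptyset$ for all $k$-subsets $A, B$; setting $\tau := \sigma_j^{-1}\sigma_i$ this reads $A \cap B = \emptyset \iff \tau(A) \cap B = \emptyset$, so $A$ and $\tau(A)$ have the same disjoint $k$-subsets and hence, by the determination property above, $\tau(A) = A$ for every $A$, forcing $\tau = \mathrm{id}$ and $\sigma_i = \sigma_j$. Thus $\alpha$ is induced by a single $\sigma \in S_\Omega$, which yields $Aut_I(\Gamma(KG(n,k),r)) \cong S_\Omega$. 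For the full group I would then exploit that the construction is symmetric in the types: each $\pi \in S_r$ yields an automorphism $\beta_\pi$ sending the element $A$ of type $i$ to the element $A$ of type $\pi(i)$, which preserves incidence since $i \neq j \iff \pi(i) \neq \pi(j)$. The assignment sending $\alpha$ to the permutation it induces on the types is a homomorphism $Aut(\Gamma(KG(n,k),r)) \to S_r$ with kernel $Aut_I \cong S_\Omega$, and $\pi \mapsto \beta_\pi$ splits it, so $Aut(\Gamma(KG(n,k),r)) \cong S_\Omega \rtimes S_r$. A direct conjugation check, $\beta_\pi \alpha_\sigma \beta_\pi^{-1} = \alpha_\sigma$ (both send the element $A$ of type $i$ to $\sigma(A)$ of type $i$), shows $S_r$ centralises $S_\Omega$, upgrading the semidirect product to the direct product $Aut(\Gamma(KG(n,k),r)) \cong S_\Omega \times S_r$.

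The main obstacle is the surjectivity in the type-preserving step: since $\Gamma(KG(n,k),r)$ has no edges inside a single type, the Johnson/Kneser structure on $X_i$ is not visible directly and must be reconstructed from the bipartite incidence via common-neighbour counts, with care taken that small intersection sizes need not be separated by these counts (only the top value $|A\cap B|=k-1$ is guaranteed to be separated), after which the classical determination of $\mathrm{Aut}(J(m,k))$ for $m>2k$ can be invoked. Everything else is a routine extension-splitting and centralising computation.
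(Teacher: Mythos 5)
Your proof is correct, and it differs from the paper's in a substantive way: where you have arguments, the paper essentially has assertions. The paper's entire proof is that $S_\Omega \leq Aut_I(\Gamma(KG(n,k),r))$ by Lemma~\ref{lem:ft}, that ``obviously, it cannot be strictly bigger,'' and that symmetry of the construction in the types ``hence'' gives $Aut(\Gamma(KG(n,k),r)) \cong S_\Omega \times S_r$. The two nontrivial claims --- that every type-preserving automorphism is induced by a permutation of $\Omega$, and that the extension by $S_r$ is direct rather than merely split --- are stated without proof. You prove both: surjectivity by reconstructing the Johnson graph $J(m,k)$, $m = n+k(r-2)$, on each fibre $X_i$ from the geometry alone via the common-neighbour counts $\binom{m-2k+|A\cap B|}{k}$, which among distinct pairs are maximised exactly when $|A\cap B|=k-1$ (valid since $m \geq 2k+1$), then invoking the classical $Aut(J(m,k)) \cong S_m$ for $m>2k$ and gluing the per-type permutations $\sigma_i$ into one $\sigma$ by the cross-type disjointness argument; and the direct product by exhibiting the splitting $\pi \mapsto \beta_\pi$ of $Aut \to S_r$ and checking that the $\beta_\pi$ centralise the $\alpha_\sigma$. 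The paper's version buys brevity; yours buys an actual proof --- the ``obviously'' conceals precisely the point where work is needed, since a priori a type-preserving automorphism of the incidence system need not respect any set-theoretic structure on the $X_i$, and your common-neighbour step is what rules that out.

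One minor repair: the injectivity of $\sigma \mapsto \alpha_\sigma$ follows from faithfulness of the $S_\Omega$-action on $k$-subsets (if $\sigma(x)=y\neq x$, a $k$-set containing $x$ but not $y$ is moved; this uses $k<m$), not from the fact that a $k$-set is determined by the family of $k$-sets disjoint from it. That determination property is the right tool where you actually deploy it later, namely to force $\tau = \mathrm{id}$ in the gluing step.
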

\begin{proof}
By Lemma~\ref{lem:ft}, we know that $Aut_I(\Gamma(KG(n,k),r))\geq S_\Omega$. Obviously, it cannot be strictly bigger.
Construction~\ref{construction} is symmetric in the set of types. Hence $Aut(\Gamma(KG(n,k),r)\cong S_\Omega\times S_r$.
\end{proof}

We summarize in the following theorem all the properties we have proved on $\Gamma(KG(n,k),r)$. Note that the gonality and diameters of the rank two residues of the Buekenhout diagram were computed in Lemma~\ref{lem:gonality} and Lemma~\ref{lem:diameter}.   

\begin{theorem}
$\Gamma(KG(n,k),r)$ is a thick, residually connected and flag-transitive incidence geometry of rank $r$. Its automorphism groups $Aut_I(\Gamma) \cong S_\Omega$ and $Aut(\Gamma) \cong S_\Omega\times S_r$. The Buekenhout diagram of $\Gamma(KG(n,k),r)$ is a complete graph. The orders  of the diagram are $n-k \choose k-1$, the number of elements of each type is ${n+k(r-2)} \choose k$, the edges are labelled as $d-g-d$ where $g=3$ if $n=2k+1$ and $2$ otherwise, and $d=2\lceil\frac{k}{n-2k}\rceil +1$.
\end{theorem}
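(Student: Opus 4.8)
The plan is to assemble the statement directly from the lemmas already established, since each clause of the theorem has been proved in isolation. First I would record the four qualitative properties: $\Gamma(KG(n,k),r)$ is an incidence geometry (the first lemma of this section), it is thick (the Corollary following Lemma~\ref{lem:order}), it is residually connected (the lemma proved by induction on $r$), and it is flag-transitive (Lemma~\ref{lem:ft}). Together with $|I|=r$ from Construction~\ref{construction}, these give a thick, residually connected, flag-transitive geometry of rank $r$. The group statements are exactly the content of the final structural lemma, namely $Aut_I(\Gamma)\cong S_\Omega$ and $Aut(\Gamma)\cong S_\Omega\times S_r$.

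It then remains to describe the Buekenhout diagram. Since $\Gamma$ is firm, residually connected and flag-transitive, its diagram is well defined; moreover, because $\Gamma$ is flag-transitive, all residues of a fixed type $\{i,j\}$ are mutually isomorphic, so every label on the diagram is unambiguous. The key point for the shape of the diagram is that every rank two residue is isomorphic to $\tilde{KG}(n,k)$ (the lemma immediately preceding the automorphism lemma). Such a residue is never a generalised digon: an element $(A,0)$ is incident to $(B,1)$ only when $A$ and $B$ are disjoint $k$-subsets, so for $n\geq 2k$ not every point lies on every line. Hence every pair $\{i,j\}\subseteq I$ carries an edge, and the diagram is the complete graph on $I$.

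Finally I would read off the numerical labels from the residue $\tilde{KG}(n,k)$ and from the counting lemmas. The $i$-order $s_i$ is $\binom{n-k}{k}-1$ by Lemma~\ref{lem:order}, and the number $n_i$ of elements of each type is $\binom{n+k(r-2)}{k}$, since $X_i$ consists of all $k$-subsets of $\Omega$ and $|\Omega|=n+k(r-2)$ by Construction~\ref{construction}. On each edge the gonality $g$ equals half the girth of $\tilde{KG}(n,k)$, which by Lemma~\ref{lem:gonality} is $3$ when $n=2k+1$ and $2$ when $n\geq 2k+2$; and, the residue being symmetric in its two types, both diameters equal $d=2\lceil\frac{k}{n-2k}\rceil+1$ by Lemma~\ref{lem:diameter}. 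Assembling these yields the edge label $d-g-d$ claimed in the statement.

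Because no genuinely new argument is required, there is no serious obstacle here; the theorem is essentially a bookkeeping synthesis. The only points demanding care are verifying that the diagram is well defined and homogeneous across each type, which is guaranteed by firmness, residual connectedness and flag-transitivity, and confirming that no rank two residue degenerates to a generalised digon, so that the diagram is complete rather than missing some edges.
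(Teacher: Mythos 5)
Your proposal is correct and matches the paper's approach exactly: the paper offers no separate proof, presenting the theorem as a summary of the preceding lemmas (with a pointer to Lemma~\ref{lem:gonality} and Lemma~\ref{lem:diameter} for the edge labels), which is precisely the bookkeeping synthesis you carry out. Your additional check that no rank two residue is a generalised digon (so the diagram is genuinely a complete graph), and your reading of the $i$-order as $\binom{n-k}{k}-1$ in agreement with Lemma~\ref{lem:order}, are both sound and, if anything, make the assembly more careful than the paper's implicit argument.
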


\begin{corollary}
The incidence graph of $\Gamma(KG(n,k),r)$ is a locally X graph.
\end{corollary}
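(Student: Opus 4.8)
The plan is to show directly that, for any vertex $v$ of the incidence graph of $\Gamma(KG(n,k),r)$, the subgraph induced on the neighbours of $v$ is nothing but the incidence graph of the residue $\Gamma_{\{v\}}$ of the rank one flag $\{v\}$, and then to invoke flag-transitivity together with the type-symmetry of the construction to conclude that all these residues, and hence all these induced neighbour-subgraphs, are mutually isomorphic to one fixed graph $X$.

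First I would fix a vertex $v$ and, without loss of generality by the symmetry of Construction~\ref{construction} in the set of types, assume $t(v) = r$, so that $v$ is a $k$-subset $A$ of $\Omega$. By definition of the incidence relation, two elements of $\Gamma(KG(n,k),r)$ are incident precisely when they have distinct types and are disjoint as subsets of $\Omega$; in particular no two elements of the same type are ever incident. Consequently the neighbours of $v$ in the incidence graph are exactly the elements of type different from $r$ that are disjoint from $A$, which is precisely the element set $X_{\{v\}}$ of the residue of the flag $\{v\}$, a residue whose type set is $I \setminus \{r\} = \{1, \ldots, r-1\}$ and which therefore has rank $r-1$. Note that $v$ itself is correctly excluded, and that every neighbour automatically has type different from that of $v$, so the open neighbourhood of $v$ matches $X_{\{v\}}$ on the nose.

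Next I would check that the adjacency relations agree. Two neighbours $x,y$ of $v$ are adjacent in the incidence graph of $\Gamma(KG(n,k),r)$ if and only if $x*y$, that is, if and only if they have distinct types and are disjoint; but this is exactly the restriction of the incidence relation to $X_{\{v\}}$ in the residue $\Gamma_{\{v\}}$. Hence, as a graph, the subgraph induced on the neighbourhood of $v$ coincides with the incidence graph of $\Gamma_{\{v\}}$. Concretely, the subsets disjoint from $A$ live in $\Omega \setminus A$, a set of size $n+k(r-2)-k = n+k((r-1)-2)$, so this residue is itself a copy of $\Gamma(KG(n,k),r-1)$, and its incidence graph is the graph we call $X$.

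Finally I would appeal to flag-transitivity. By Lemma~\ref{lem:ft}, $Aut_I(\Gamma)$ is transitive on the flags of each fixed type, so any two rank one flags of the same type have isomorphic residues; combined with the symmetry of the construction in the types, the residues of all rank one flags are isomorphic to one fixed geometry, whose incidence graph is the graph $X$ described above. Therefore the subgraph induced on the neighbours of every vertex is isomorphic to $X$, which is exactly the assertion that the incidence graph is locally $X$. I expect no serious obstacle here: the only point demanding care is the bookkeeping in the second paragraph, namely verifying that the open neighbourhood of $v$ reproduces $X_{\{v\}}$ exactly, after which the identification of the two graphs and the use of flag-transitivity are routine.
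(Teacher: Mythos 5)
Your proposal is correct and follows essentially the same route as the paper: identify the subgraph induced on the neighbours of a vertex with the incidence graph of the rank $r-1$ residue of that vertex, then use flag-transitivity (Lemma~\ref{lem:ft}) together with the type symmetry of the construction ($Aut(\Gamma(KG(n,k),r))\cong S_\Omega\times S_r$) to conclude that all these residues, hence all neighbourhood graphs, are isomorphic. Your additional observation that each such residue is itself a copy of $\Gamma(KG(n,k),r-1)$ (on a ground set of size $n+k((r-1)-2)$) sharpens the identification of $X$ but does not change the argument.
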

\begin{proof}
Since $\Gamma(KG(n,k),r)$ is a flag-transitive geometry and since $Aut(\Gamma(KG(n,k),r)\cong S_\Omega\times S_r$, all the incidence graphs of the residues of rank $r-1$ are isomorphic.
\end{proof}
\begin{corollary}
There exists a locally Desargues graph whose automorphism group is isomorphic to $S_7$.
\end{corollary}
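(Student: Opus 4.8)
The plan is to first fix the parameters that make Desargues' configuration appear as the local graph, and then to modify the resulting graph so that its automorphism group is exactly $S_7$. Since the rank two residues of $\Gamma(KG(n,k),r)$ are isomorphic to $\tilde{KG}(n,k)$, and $\tilde{KG}(5,2)$ is Desargues' configuration, the only choice producing a \emph{locally Desargues} graph is $(n,k)=(5,2)$ and $r=3$; then $\Omega=\{1,\dots,7\}$ and $S_\Omega\cong S_7$. By the corollary that the incidence graph of $\Gamma(KG(n,k),r)$ is locally $X$, the incidence graph $\mathcal{G}_0$ of $\Gamma(KG(5,2),3)$ is locally Desargues. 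However $\mathcal{G}_0$ is \emph{not} the graph we want: by the lemma computing the automorphism group, $Aut(\Gamma)\cong S_7\times S_3$, and since every automorphism of the geometry is an automorphism of its incidence graph, $Aut(\mathcal{G}_0)\supseteq S_7\times S_3$. The first task is therefore to understand, and then to remove, this surplus $S_3$.

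The surplus arises because the three types are three identical copies of the $2$-subsets of $\Omega$, so permuting the copies is a graph automorphism; locally, the transposition of the two types distinct from $t(x)$ realises the point--line duality of the Desargues configuration, which is exactly why each neighbourhood attains the full group $S_5\times S_2\cong Aut(\text{Desargues})$. To suppress this, I would pass to a graph carried by a single $S_7$-orbit, with no type-copies left to permute. Concretely, take the $42$ ordered pairs $(a,b)$, $a\neq b$, of $\Omega$, so that the stabiliser of a vertex is $S_5$; I would then define an $S_7$-invariant adjacency whose neighbourhood of every vertex is the \emph{bipartite double cover} of the Petersen graph, which is the Desargues graph, with the two sides of the double cover separated by the orientation of the pairs so that no local duality swap is available.

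The verification would proceed in four steps: (i) define the adjacency and check that each vertex has degree $20$; (ii) show that $\mathrm{Stab}(a,b)\cong S_5$ acts on the $20$ neighbours as the diagonal subgroup $\{(g,\mathrm{sgn}\,g):g\in S_5\}\le S_5\times S_2=Aut(\text{Desargues})$, so that even permutations preserve, and odd permutations interchange, the two sides of the double cover, forcing the induced neighbourhood to be the Desargues graph; (iii) conclude $S_7\le Aut(\mathcal{G})$ by construction; and (iv) prove $Aut(\mathcal{G})\le S_7$. An equivalent incidence-geometric formulation of the same idea is to seek a flag-transitive coset geometry for $S_7$ all of whose rank two residues are Desargues' configuration but whose diagram automorphisms are \emph{not} realised inside the geometry, so that $Aut=Aut_I\cong S_7$.

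The main obstacle is twofold. First, the orientation must be made globally consistent: the adjacency of two support-disjoint ordered pairs is seen inside the neighbourhoods of several vertices whose complementary $5$-sets differ, so the sign that cuts each $K_{2,2}$ block down to the matching of the double cover must agree across all of them; this is precisely where the naive ``disjoint support'' rule fails, since it leaves each neighbourhood $6$-regular (the lexicographic blow-up of Petersen) rather than $3$-regular. Second, one must rule out automorphisms beyond $S_7$: as the graph is vertex-transitive with point stabiliser acting as the diagonal $S_5$, an automorphism is pinned down by its effect on one vertex together with the admissible local maps, and connectivity should then force the group to be $S_7$. Both points are delicate enough that I would certify them by a finite computation in a computer algebra system, which simultaneously establishes that such a graph exists and that its automorphism group is isomorphic to $S_7$.
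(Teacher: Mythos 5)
You part ways with the paper at the very first step, and for a mathematically legitimate reason. The paper's entire proof is the single sentence that the desired graph is the incidence graph of $\Gamma(KG(5,2),3)$. Your objection to this is correct: the type permutations $(A,i)\mapsto (A,\sigma(i))$, $\sigma\in S_3$, are graph automorphisms commuting with the natural $S_7$-action, so the automorphism group of that incidence graph contains $S_7\times S_3$ --- exactly as the paper's own lemma $Aut(\Gamma(KG(n,k),r))\cong S_\Omega\times S_r$ records. The paper's one-line proof therefore supports the corollary only if ``automorphism group'' is read as the type-preserving group $Aut_I(\Gamma)\cong S_7$, and your decision to look for a different witness is an understandable reaction to that discrepancy.

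However, the replacement you propose cannot work, and the gaps are fatal rather than technical. The adjacency you want does not exist: since $S_7$ is $4$-transitive on $\Omega$, the $420$ unordered pairs $\{(a,b),(c,d)\}$ of ordered pairs with $a,b,c,d$ distinct form a single $S_7$-orbit, so any $S_7$-invariant edge set supported on disjoint-support pairs is either empty or consists of all of them; there is no invariant ``orientation'' or ``sign'' rule that thins the resulting $6$-regular local graph (the blow-up of Petersen you mention) down to the $3$-regular Desargues graph. Your step (ii) makes the contradiction concrete: the action of $\mathrm{Stab}((a,b))\cong S_5$ on the $20$ neighbours is forced, namely $g\cdot(c,d)=(g(c),g(d))$, which under the identification $Aut \cong S_5\times S_2$ is the subgroup $S_5\times\{1\}$, not the diagonal; indeed the transposition $g=(e\,f)$ of two points of $\Omega\setminus\{a,b,c,d\}$ is odd yet fixes the neighbour $(c,d)$, whereas your diagonal action would require it to send $(c,d)$ to the other lift $(d,c)$. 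Finally, even where the plan is not self-contradictory, it delegates both the existence of the graph and the determination of its full automorphism group to an unperformed computer calculation, so nothing is actually proved. Whether a locally Desargues graph whose \emph{full} automorphism group is exactly $S_7$ exists is thus left open by your proposal; the graph the paper exhibits is locally Desargues and has $Aut_I\cong S_7$, but its full graph automorphism group is $S_7\times S_3$.
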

\begin{proof}
Such a graph can be obtained as the incidence graph of $\Gamma(KG(5,2),3)$.
\end{proof}

We highlight that $\Gamma(KG(5,2),3)$ was already given in~\cite[page 86]{BDL1996}. Furthermore, geometries satisfying the intersection property of rank two, $(IP)_2$, attracted much attention in the nineties and noughties (see, for instance,~\cite{BDL1996,Lee2008}). It turns out that some of the incidence geometries obtained by Construction~\ref{construction} satisfy $(IP)_2$, as shown in the next corollary.

\begin{corollary}
$\Gamma(KG(n,k),r)$ is $(IP)_2$ if and only if $n=2k+1$.
\end{corollary}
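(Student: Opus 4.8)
The plan is to use the fact, established just above, that every rank two residue of $\Gamma(KG(n,k),r)$ is isomorphic to the neighborhood geometry $\tilde{KG}(n,k)$. Consequently, $\Gamma(KG(n,k),r)$ satisfies $(IP)_2$ if and only if this single rank two geometry $\tilde{KG}(n,k)$ is either a generalised digon or a partial linear space. I would therefore reduce the whole statement to an analysis of $\tilde{KG}(n,k)$, and then translate the two defining conditions into combinatorial properties of its incidence graph, which is the bipartite graph on $G_0\times\{0\}\cup G_0\times\{1\}$ with $(p,0)$ joined to $(q,1)$ exactly when the $k$-subsets $p$ and $q$ are disjoint.

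First I would dispose of the generalised digon case. Recall that $\tilde{KG}(n,k)$ is a generalised digon precisely when every point is incident to every line, i.e. when its incidence graph is complete bipartite. But for any $k$-subset $p$, the elements $(p,0)$ and $(p,1)$ are never incident, since $p\cap p=p\neq\emptyset$ as $k\geq 1$. Hence $\tilde{KG}(n,k)$ is never a generalised digon, for any admissible $n$ and $k$. This means that $(IP)_2$ holds for $\Gamma(KG(n,k),r)$ if and only if $\tilde{KG}(n,k)$ is a partial linear space.

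Next I would characterise when $\tilde{KG}(n,k)$ is a partial linear space in terms of gonality. By definition a partial linear space has at most one line through each pair of points; since the incidence graph is bipartite, this is equivalent to the absence of a $4$-cycle of the form point--line--point--line, i.e. to the girth of the incidence graph being at least $6$, i.e. to the gonality being at least $3$. Now I would simply invoke Lemma~\ref{lem:gonality}: the gonality of $\tilde{KG}(n,k)$ equals $3$ when $n=2k+1$ and $2$ when $n\geq 2k+2$. Since the construction assumes $n\geq 2k+1$, these are the only two cases. Thus $\tilde{KG}(n,k)$ is a partial linear space exactly when $n=2k+1$, and combining this with the previous paragraph yields that $\Gamma(KG(n,k),r)$ is $(IP)_2$ if and only if $n=2k+1$.

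The argument is essentially a chain of definitional translations, so there is no real computational obstacle; the one point requiring care is the correct dictionary between the two rank two axioms and the girth/gonality of the incidence graph. In particular, one must check that when the gonality is $2$ the residue \emph{fails} to be a generalised digon (rather than trivially being one), which is exactly what the non-incidence of $(p,0)$ and $(p,1)$ secures, and that the partial linear space condition is genuinely equivalent to girth at least $6$ rather than merely implied by it.
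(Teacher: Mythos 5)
Your proof is correct, and its main line coincides with the paper's: reduce to the rank two residues (all isomorphic to $\tilde{KG}(n,k)$), observe that $(IP)_2$ then amounts to ``generalised digon or gonality at least $3$'', and apply Lemma~\ref{lem:gonality} to conclude that the gonality condition holds exactly when $n=2k+1$. The one place where you genuinely diverge is in excluding the generalised digon case. The paper does this by citing Lemma~\ref{lem:diameter}: a generalised digon has both diameters equal to $2$, whereas the residues have diameter $2\lceil k/(n-2k)\rceil+1\geq 3$, so they are never generalised digons. You instead argue directly from the incidence relation: $(p,0)$ and $(p,1)$ are never incident because $p\cap p=p\neq\emptyset$, so the incidence graph is not complete bipartite. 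Your argument is more elementary and self-contained --- it does not need the diameter computation at all, only the definition of $\tilde{*}$ --- while the paper's version is shorter given that Lemma~\ref{lem:diameter} has already been established for the Buekenhout diagram anyway. You also spell out explicitly the dictionary ``partial linear space $\Leftrightarrow$ no $4$-cycle in the bipartite incidence graph $\Leftrightarrow$ gonality $\geq 3$,'' which the paper leaves implicit; making that equivalence explicit is a genuine improvement in rigour, and your closing remark about why gonality $2$ must be shown to \emph{fail} the digon alternative (rather than vacuously satisfy it) is precisely the point the paper's diameter citation is there to handle.
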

\begin{proof}
For a rank two residue of $\Gamma(KG(n,k),r)$ to satisfy $(IP)_2$, we need it to be a generalised digon or its gonality to be at least 3. Generalised digons have gonality and diameters equal to two. Lemma~\ref{lem:diameter} and Lemma~\ref{lem:gonality} then finish the proof.
\end{proof}

Observe that when $n=2k+1$, the diameters and gonality written on the edges of the Buekenhout diagram are respectively $n$ and $3$.

\bibliographystyle{plain}

\end{document}